\newcommand{\numberseries}{\mdseries}   %Fontseries used for numbering
\newlength{\thmtopspace}                %Space above theorem
\newlength{\thmbotspace}                %Space below theorem
\newlength{\thmheadspace}               %Space after theorem label
\newlength{\thmindent}                  %For indenting
\renewcommand{\subparagraph}{\vspace*{\thmbotspace}}
\newtheoremstyle{bfupright head,slanted body}
                {\thmtopspace}{\thmbotspace}
                {\slshape}{\thmindent}{\bfseries}{.}{\thmheadspace}
                {{\numberseries \thmnumber{(#2) }}\thmnote{#3}}
\newtheoremstyle{bfupright head,upright body}
                {\thmtopspace}{\thmbotspace}
                {\upshape}{\thmindent}{\bfseries}{.}{\thmheadspace}
                {{\numberseries \thmnumber{(#2) }}\thmnote{#3}}
\newtheoremstyle{bfit head,upright body}
                {\thmtopspace}{\thmbotspace}
                {\upshape}{\thmindent}{\upshape}{.}{\thmheadspace}
                {{\numberseries\thmnumber{(#2) }}
                {\bfseries\itshape\thmnote{\negthickspace#3}}}
\newtheoremstyle{it head,upright body}
                {\thmtopspace}{\thmbotspace}
                {\upshape}{\thmindent}{\upshape}{.}{\thmheadspace}
                {{\numberseries\thmnumber{(#2) }}
                {\itshape\thmnote{\negthickspace#3}}}
\newtheoremstyle{fixed bf head,slanted body}
                {\thmtopspace}{\thmbotspace}{\slshape}
                {\thmindent}{\bfseries}{.}{\thmheadspace}
                {{\numberseries \thmnumber{(#2) }}\thmname{#1}\thmnote{ (#3)}}
\newtheoremstyle{fixed bf head,upright body}
                {\thmtopspace}{\thmbotspace}{\upshape}
                {\thmindent}{\bfseries}{.}{\thmheadspace}
                {{\numberseries \thmnumber{(#2) }}\thmname{#1}\thmnote{ (#3)}}
\newtheoremstyle{fixed bfit head,upright body}
                {\thmtopspace}{\thmbotspace}{\upshape}
                {\thmindent}{\bfseries\itshape}{.}{\thmheadspace}
                {{\numberseries \thmnumber{(#2) }}\thmname{#1}\thmnote{ (#3)}}
\newtheoremstyle{sc head,small body}
                {\thmtopspace}{\thmbotspace}
                {\small\upshape}{\thmindent}{\scshape}{.}{\thmheadspace}
                {\thmname{#1}}
\newtheoremstyle{numbered paragraph}
                {\thmtopspace}{\thmbotspace}{\upshape}
                {\thmindent}{\upshape}{}{0pt}
                {{\numberseries \thmnumber{(#2) }}}
\newtheoremstyle{unnumbered paragraph}
                {\thmtopspace}{\thmbotspace}{\upshape}
                {\parindent}{\upshape}{}{0pt}
\theoremstyle{bfupright head,slanted body}
\newtheorem{res}{}[section]             \newtheorem*{res*}{}
\theoremstyle{bfit head,upright body}
                 \newtheorem*{com*}{}
\theoremstyle{bfupright head,upright body}
\newtheorem{bfhpg}[res]{}               \newtheorem*{bfhpg*}{}
\theoremstyle{it head,upright body}
               \newtheorem*{ithpg*}{}
\theoremstyle{sc head,small body}
\theoremstyle{fixed bf head,slanted body}
\newtheorem{thm}[res]{Theorem}          \newtheorem*{thm*}{Theorem}
\newtheorem{prp}[res]{Proposition}      \newtheorem*{prp*}{Proposition}
        \newtheorem*{cor*}{Corollary}
\newtheorem{lem}[res]{Lemma}            \newtheorem*{lem*}{Lemma}
\theoremstyle{fixed bf head,upright body}
\newtheorem{dfn}[res]{Definition}       \newtheorem*{dfn*}{Definition}
     \newtheorem*{con*}{Construction}
      \newtheorem*{obs*}{Observation}
\newtheorem{rmk}[res]{Remark}           \newtheorem*{rmk*}{Remark}
\newtheorem{exa}[res]{Example}          \newtheorem*{exa*}{Example}
         \newtheorem*{exe*}{Exercise}
\newtheorem{stp}[res]{Setup}            \newtheorem{stp*}{Setup}
\theoremstyle{numbered paragraph}
\theoremstyle{unnumbered paragraph}
\newtheorem{ipg*}{}
\newlength{\thmlistleft}        %leftmargin
\newlength{\thmlistright}       %rightmargin
\newlength{\thmlistpartopsep}   %partopsep
\newlength{\thmlisttopsep}      %topsep
\newlength{\thmlistparsep}      %parsep
\newlength{\thmlistitemsep}     %itemsep
\newcounter{eqc} 
  {\end{list}}%
\newcounter{prt}
\newenvironment{prt}{\begin{list}{\upshape (\alph{prt})}%
    {\usecounter{prt}%
      \setlength{\leftmargin}{\thmlistleft}%
      \setlength{\labelwidth}{\thmlistleft}%
      \setlength{\rightmargin}{\thmlistright}%
      \setlength{\partopsep}{\thmlistpartopsep}%
      \setlength{\topsep}{\thmlisttopsep}%
      \setlength{\parsep}{\thmlistparsep}%
      \setlength{\itemsep}{\thmlistitemsep}}}%
  {\end{list}}%
\newcommand{\prtlbl}[1]{{\upshape(#1)}}
\newcounter{rqm}
\newenvironment{rqm}{\begin{list}{\upshape (\arabic{rqm})}%
    {\usecounter{rqm}%
      \setlength{\leftmargin}{\thmlistleft}%
      \setlength{\labelwidth}{\thmlistleft}%
      \setlength{\rightmargin}{\thmlistright}%
      \setlength{\partopsep}{\thmlistpartopsep}%
      \setlength{\topsep}{\thmlisttopsep}%
      \setlength{\parsep}{\thmlistparsep}%
      \setlength{\itemsep}{\thmlistitemsep}}}%
  {\end{list}}%
\newenvironment{prf*}[1][Proof]{%
  \begin{proof}[\bf #1]
    \setcounter{equation}{0}
    \renewcommand{\theequation}{\arabic{equation}}}
  {\end{proof}
}
  \newcommand{\proofoftag}[2][:]{(#2)#1}
\newcommand{\pgref}[1]{(\ref{#1})}
\newcommand{\thmref}[2][Theorem~]{#1\pgref{thm:#2}}
\newcommand{\dfnref}[2][Definition~]{#1\pgref{dfn:#2}}
\newcommand{\secref}[2][Section~]{#1\ref{sec:#2}}
\newcommand{\partpgref}[2]{(\ref{#1})\prtlbl{#2}}
\newcommand{\partthmref}[3][Theorem~]{#1\partpgref{thm:#2}{#3}}
\newcommand{\partprpref}[3][Proposition~]{#1\partpgref{prp:#2}{#3}}
\newcommand{\partlemref}[3][Lemma~]{#1\partpgref{lem:#2}{#3}}
\newcommand{\partdfnref}[3][Definition~]{#1\partpgref{dfn:#2}{#3}}
\renewcommand{\eqref}[1]{\pgref{eq:#1}}
\newcommand{\rescite}[2][?]{\cite[#1]{#2}}
\newcommand{\thmcite}[2][?]{\cite[thm.~#1]{#2}}
\newcommand{\corcite}[2][?]{\cite[cor.~#1]{#2}}
\newcommand{\prpcite}[2][?]{\cite[prop.~#1]{#2}}
\newcommand{\lemcite}[2][?]{\cite[lem.~#1]{#2}}
\newcommand{\chpcite}[2][?]{\cite[chap.~#1]{#2}}
\newcommand{\dfncite}[2][?]{\cite[def.~#1]{#2}}
\newcommand{\obscite}[2][?]{\cite[obs.~#1]{#2}}
\def\@nobreak@#1{\mathchoice%
  {\nobreakdef@\displaystyle\f@size{#1}}%
  {\nobreakdef@\nobreakstyle\tf@size{\firstchoice@false #1}}%
  {\nobreakdef@\nobreakstyle\sf@size{\firstchoice@false #1}}%
  {\nobreakdef@\nobreakstyle\ssf@size{\firstchoice@false #1}}%
  \check@mathfonts}%
\def\nobreakdef@#1#2#3{\hbox{{%
                    \everymath{#1}%
                    \let\f@size#2\selectfont%
                    #3}}}%
\numberwithin{equation}{res}
\newcommand{\s}[1]{\mathsf{#1}}
\newcommand{\Mod}{\mathsf{Mod}}
\newcommand{\ZZ}{\mathbb{Z}}
\newcommand{\QZ}{\mathbb{Q}/\mathbb{Z}}
\newcommand{\Rop}{R^\circ}
\newcommand{\Hom}{\operatorname{Hom}}
\newcommand{\Ext}{\operatorname{Ext}}
\newcommand{\Tor}{\operatorname{Tor}}
\newcommand{\depth}{\operatorname{depth}}
\newcommand{\width}{\operatorname{width}}
\newcommand{\RHom}{\operatorname{\mathbf{R}Hom}}
\newcommand{\LTensor}{\otimes^{\mathbf{L}}}
\def\widebardisplay#1{%
  \setbox0=\hbox{$\displaystyle #1$}
  \dimen0=\wd0%
  \advance\dimen0 by -4pt% Antallet af punkter, linjen skal være
                         % kortere.
  \vbox{%
    \nointerlineskip%
    \moveright 2pt % Antallet af punkter, linjen forrykkes mod højre.
    \vbox{\hrule width \dimen0}%
    \nointerlineskip%
    \kern 1pt% Den lodrette afstand mellem linjen og symbolerne.
    \box0%
    }%
  }
\def\widebartext#1{%
  \setbox0=\hbox{$#1$}
  \dimen0=\wd0%
  \advance\dimen0 by -4pt% Antallet af punkter, linjen skal være
                         % kortere.
  \vbox{%
    \nointerlineskip%
    \moveright 2pt % Antallet af punkter, linjen forrykkes mod højre.
    \vbox{\hrule width \dimen0}%
    \nointerlineskip%
    \kern 1pt% Den lodrette afstand mellem linjen og symbolerne.
    \box0%
    }%
  }
\def\widebarscript#1{%
  \setbox0=\hbox{$\scriptstyle #1$}
  \dimen0=\wd0%
  \advance\dimen0 by -3pt% Antallet af punkter, linjen skal være
                         % kortere.
  \vbox{%
    \nointerlineskip%
    \moveright 1.5pt % Antallet af punkter, linjen forrykkes mod højre.
    \vbox{\hrule width \dimen0}%
    \nointerlineskip%
    \kern .8pt% Den lodrette afstand mellem linjen og symbolerne.
    \box0%
    }%
  }
\def\widebarscriptscript#1{%
  \setbox0=\hbox{$\scriptscriptstyle #1$}
  \dimen0=\wd0%
  \advance\dimen0 by -2pt% Antallet af punkter, linjen skal være
                         % kortere.
  \vbox{%
    \nointerlineskip%
    \moveright 1pt % Antallet af punkter, linjen forrykkes mod højre.
    \vbox{\hrule width \dimen0}%
    \nointerlineskip%
    \kern .6pt% Den lodrette afstand mellem linjen og symbolerne.
    \box0%
    }%
  }
\begin{document}

\title{Cotorsion pairs induced by duality pairs}

\author{Henrik Holm \ } 
\address{Department of Basic Sciences and Environment, Faculty of Life
  Sciences, University of Copenhagen, Thorvaldsensvej 40, DK-1871
  Frederiksberg C, Denmark}
\email{hholm@life.ku.dk} 
\urladdr{http://www.dina.life.ku.dk/\~{}hholm}

\author{ \ Peter J{\o}rgensen} 
\address{School of Mathematics and Statistics, Newcastle University,
  Newcastle upon Tyne NE1 7RU, United Kingdom}
\email{peter.jorgensen@newcastle.ac.uk}
\urladdr{http://www.staff.ncl.ac.uk/peter.jorgensen}

%\thanks{...}

%\dedicatory{In memory of ...}

%\date{14 April 2009}

\keywords{Auslander class, Bass class, cotorsion pair, cover, duality
  pair, Gorenstein flat dimension, Gorenstein injective dimension,
  preenvelope, semidualizing~complex}

\subjclass[2000]{13D05, 13D07, 18G25}
%13D05 Homological dimension 
%13D07 Homological functors on modules (Tor, Ext, etc.)  
%18G25 Relative homological algebra, projective classes

\begin{abstract}
  We introduce the notion of a duality pair and demonstrate how the
  left half of such a pair is ``often'' covering and preenveloping. As
  an application, we generalize a result by Enochs et al.~on Auslander
  and Bass classes, and we prove that the class of Gorenstein
  injective modules---introduced by Enochs and Jenda---is covering
  when the ground ring has a dualizing complex.
\end{abstract}

\maketitle

\section*{Introduction}

What is now known as semi\-dualizing modules were studied more than 25
years ago under other names by e.g.~Foxby \cite{HBF72} (PG-modules of
rank one), Golod \cite{ESG84} (suitable modules) and Vasconcelos
\cite{dtmc} (spherical modules). As a common generalization of the
notion of a semidualizing module and that of a dualizing complex---in
the sense of Hartshorne \cite{rad}---Christensen \cite{LWC01a}
introduced in 2001 the notion of a semidualizing complex,
cf.~\pgref{pg:sc}.

Avramov and Foxby \cite{LLAHBF97} and Christensen \cite{LWC01a}
demonstrated how a semiduali\-zing complex $C$ over a commutative
noetherian ring $R$ gives rise to two important classes of
$R$-modules, namely the so-called Auslander class $\s{A}^C_0$ and Bass
class $\s{B}^C_0$, cf.~\pgref{pg:AB}.  Semi\-dualizing complexes and
their Auslander and Bass classes have caught the attention of several
authors, but this paper is motivated by a result of Enochs et
al.~\cite{EEE-HH}, for which we prove the following generalization in
\thmref{Acotorsion}.
\begin{res*}[Theorem A]
  Let $R$ be a commutative noetherian ring, and let $C$ be a
  semidualizing complex of $R$-modules. Then the following conclusions
  hold:
  \begin{prt}
  \item $(\s{A}^C_0,(\s{A}^C_0)^\perp)$ is a perfect cotorsion pair,
    in particular, the class $\s{A}^C_0$ is covering. Furthermore,
    $\s{A}^C_0$ is preenveloping.
  \item The class $\s{B}^C_0$ is covering and preenveloping.
  \end{prt}
\end{res*}
Cotorsion pairs---introduced by Salce \cite{Salce}---covering classes
and preenveloping classes are central notions in relative homological
algebra. We refer the reader to \pgref{pg:CovEnv} and \pgref{pg:C},
and further to the monograph \cite{rha} by Enochs and Jenda, for
relevant details about these notions.

Theorem A extends the main result of \cite{EEE-HH} in two directions:
In \cite{EEE-HH}, $C$ is assumed to be a semidualizing \textsl{module}
(as opposed to a semidualizing \textsl{complex}), and furthermore the
covering property of $\s{B}^C_0$ is new.

To prove Theorem A, we first establish \thmref{main} and then combine
it with the fact that $\s{A}^C_0$ and $\s{B}^C_0$ are parts of
appropriate \emph{duality pairs}. The latter notion is introduced in
\dfnref{complemented}.  The technique used to prove Theorem A applies
to show that several other classes of modules are covering and/or
preenveloping; see for example Theorems B below.

Now, assume that $R$ has a dualizing complex $D$ in the sense of
Hartshorne~\cite{rad}, and consider a semidualizing
$R$-\textsl{module} $C$. Then \mbox{$C^\dagger=\RHom_R(C,D)$} is a
semidualizing complex for which the associated Auslander and Bass
classes can be characterized in terms of two homological dimensions
introduced in \cite{HHlPJr06}, and studied further by Sather-Wagstaff,
Sharif, and White \cite{SSW-TS-DWb, SSW-TS-DWa, SSW-08, SSW-SY}. More
precisely, for any $R$-module $M$ one has the following equivalences:
\begin{align*}
  M \in \s{A}^{C^\dagger}_0(R) &\iff 
  \operatorname{\text{$C$}-Gfd}_RM\leqslant\operatorname{dim} R, \\
  M \in \s{B}^{C^\dagger}_0(R) &\iff
  \operatorname{\text{$C$}-Gid}_RM\leqslant\operatorname{dim} R.
\end{align*}
Here $\operatorname{\text{$C$}-Gfd}_RM$ and
$\operatorname{\text{$C$}-Gid}_RM$ are the so-called $C$-Gorenstein
flat and $C$-Goren\-stein injective dimensions of $M$. Naturally,
Theorem A applies to the semidualizing complex $C^\dagger$, but in
view of the equivalences above, Theorem B below---which is a special
case of \thmref{GFCcotorsion}---gives more information.

\begin{res*}[Theorem B]
  Let $R$ be a commutative noetherian ring with a dualizing complex,
  and let $n \geqslant 0$ be an integer. Consider the following
  classes of $R$-modules:
  \begin{align*}
    \s{GF}^C_n &= \{ M \,|\,
    \operatorname{\text{$C$}-Gfd}_RM \leqslant n \},
    \\
    \s{GI}^C_n &= \{ M \,|\,
    \operatorname{\text{$C$}-Gid}_RM \leqslant n \}.
  \end{align*}
  Then the next conclusions hold:
  \begin{prt}
  \item \mbox{$(\s{GF}^C_n,(\s{GF}^C_n)^\perp)$} is a perfect
    cotorsion pair, in particular, $\s{GF}^C_n$ is covering.
    Furthermore, $\s{GF}^C_n$ is preenveloping.
  \item The class $\s{GI}^C_n$ is covering and preenveloping.
  \end{prt}
\end{res*}

For \mbox{$C=R$} and \mbox{$n=0$}, Theorem B\prtlbl{a} asserts that
the class of Gorenstein flat modules is the left half of a perfect
cotorsion pair, and that it is preenveloping.  The first of these
results is proved by Enochs and L{\'o}pez-Ramos
\corcite[2.11]{EEnJLR02} and the other follows immediately by
combining \cite[prop.~2.10, thm.~2.5, and~rmk.~3]{EEnJLR02} with
\thmcite[5.7]{CFH-06}.  For \mbox{$C=R$} and \mbox{$n=0$}, the second
part of Theorem B\prtlbl{b} asserts that the class of Gorenstein
injective modules is preenveloping. This is proved in
\corcite[2.7]{EEnJLR02}. Actually, Krause \thmcite[7.12]{HKr05} proves
the existence of special Gorenstein injective preenvelopes.

\section{Preliminaries} \label{sec:pre}

In this section we introduce our terminology and recall a few notions
relevant for this paper.

\begin{stp}
  Throughout, $R$ denotes a ring with identity, and $\Rop$ its
  opposite ring. Unless otherwise specified, all modules under
  consideration are unitary left modules. Recall that a right
  $R$-module can be identified with a left $\Rop$-module. We write
  $\Mod(R)$ for the category of all (left) $R$-modules.
\end{stp}

\begin{bfhpg}[Covers and envelopes]
  \label{pg:CovEnv}
  The following notions were coined by Enochs \cite{EEn81}.
  
  Let $\s{M}$ be any class of $R$-modules.  An $\s{M}$-\emph{precover}
  of an $R$-module $N$ is a homomorphism \mbox{$\varphi \colon M \to
    N$}, where $M$ is in $\s{M}$, with the property that for every
  homomorphism \mbox{$\varphi' \colon M' \to N$}, where $M'$ is in
  $\s{M}$, there exists a (not necessarily unique) homomorphsm
  \mbox{$\psi \colon M' \to M$} with \mbox{$\varphi'=\varphi\psi$}.
  An $\s{M}$-precover \mbox{$\varphi \colon M \to N$} is an
  $\s{M}$-\emph{cover} if every homomorphism \mbox{$\psi \colon M \to
    M$} satisfying \mbox{$\varphi=\varphi\psi$} is an automorphism.
  The class $\s{M}$ is called \emph{(pre)covering} if every $R$-module
  has an $\s{M}$-(pre)cover.

  The notion of an $\s{M}$-\emph{(pre)envelope} is categorically dual
  to that of an $\s{M}$-(pre)cover, and thus we will omit the
  definition here.
\end{bfhpg}

\begin{bfhpg}[Cotorsion pairs]
  \label{pg:C}
  For a class $\s{M}$ of $R$-modules one defines:
  \begin{align*}
    {}^\perp\s{M} &= \{ X \in \Mod(R) \,|\, \Ext^1_R(X,M)=0 \text{ for
      all } M \in \s{M} \}, \\
    \s{M}^\perp &= \{ Y \in \Mod(R) \,|\, \Ext^1_R(M,Y)=0 \text{ for
      all } M \in \s{M} \}.
  \end{align*}
  A \emph{cotorsion pair} is a pair $(\s{M},\s{N})$ of classes of
  $R$-modules with $\s{M}={}^\perp\s{N}$ and $\s{M}^\perp=\s{N}$.  A
  cotorsion pair $(\s{M},\s{N})$ is called \emph{perfect} if $\s{M}$
  is covering and $\s{N}$ is enveloping.  These notions go back to
  Salce \cite{Salce}.
\end{bfhpg}

\begin{bfhpg}[The derived category]
  We denote by $\s{D}(R)$ the \emph{derived category} of the abelian
  category $\Mod(R)$. We write $\s{D}_{\mathrm{b}}(R)$ for the full
  subcategory of $\s{D}(R)$ whose objects have bounded homology. The
  right derived Hom functor and the left derived tensor product
  functor are denoted by $\RHom_R(-,-)$ and $-\LTensor_R-$,
  respectively.  The reader is referred to Weibel \chpcite[10]{Wei}
  for further details.
\end{bfhpg}

\begin{bfhpg}[Semidualizing complexes] \label{pg:sc}
  The following is from Christensen \dfncite[(2.1)]{LWC01a}.

  Assume that $R$ is commutative and noetherian. A complex \mbox{$C
    \in \s{D}_{\mathrm{b}}(R)$} with degreewise finitely generated
  homology is \emph{semidualizing} if the natural homothety morphism
  \mbox{$R \to \RHom_R(C,C)$} is an isomorphism in the derived
  category $\s{D}(R)$.
\end{bfhpg}

\begin{bfhpg}[Auslander and Bass classes] 
  \label{pg:AB} 
  Assume that $R$ is commutative and noetherian, and let $C$ be a
  semidualizing $R$-complex. The following definitions are due to
  Avramov and Foxby \cite[(3.1)]{LLAHBF97} and Christensen
  \dfncite[(4.1)]{LWC01a}.

  The \emph{Auslander class} $\s{A}^C(R)$ consists of all $M \in
  \s{D}_{\mathrm{b}}(R)$ such that $C\LTensor_RM \in
  \s{D}_{\mathrm{b}}(R)$ and the canonical morphism $M \to
  \RHom_R(C,C\LTensor_RM)$ in $\s{D}(R)$ is an isomorphism.

  The \emph{Bass class} $\s{B}^C(R)$ consists of all $N \in
  \s{D}_{\mathrm{b}}(R)$ such that \mbox{$\RHom_R(C,N) \in
  \s{D}_{\mathrm{b}}(R)$} and the canonical morphism
  $C\LTensor_R\RHom_R(C,N)\to N$ in $\s{D}(R)$ is an isomorphism.

  We write $\s{A}^C_0(R)$ and $\s{B}^C_0(R)$---or simply $\s{A}^C_0$
  and $\s{B}^C_0$ if the ground ring is under\-stood---for the class of
  $R$-modules which, when considered as objects in $\s{D}(R)$, belong
  to $\s{A}^C(R)$ and $\s{B}^C(R)$, respectively.
\end{bfhpg}

\begin{rmk}
  If $C$ is a semidualizing $R$-\textsl{module} then it is possible to
  define $\s{A}^C_0(R)$ and $\s{B}^C_0(R)$ directly in $\Mod(R)$
  without using $\s{D}(R)$, cf.~\obscite[(4.10)]{LWC01a}.
\end{rmk}

\begin{bfhpg}[Homological dimensions] \label{pg:homdim}
  Let $M$ be an arbitrary $R$-module.

  We write $\operatorname{fd}_RM$ and $\operatorname{id}_RM$ for the
  \emph{flat} and \emph{injective} dimension of $M$. These classical
  notions go back to Cartan and Eilenberg \cite{CarEil}.

  We write $\operatorname{Gfd}_RM$ and $\operatorname{Gid}_RM$ for the
  \emph{Gorenstein flat} and \emph{Gorenstein injective} dimension of
  $M$. These notions were introduced by Enochs, Jenda et al.
  \cite{EEnOJn95b, EJT-93} and have subsequently been studied by
  several authors.

  When $R$ is commutative and noetherian, the definitions of Enochs,
  Jenda et al.~mentioned above have been extended in \cite{HHlPJr06}:
  For a semi\-dualizing $R$-module $C$, cf.~\pgref{pg:sc},
  \cite{HHlPJr06} introduces a \emph{$C$-Go\-ren\-stein flat}
  dimension $\operatorname{\text{$C$}-Gfd}_RM$ and a
  \emph{$C$-Go\-ren\-stein injective} dimension
  $\operatorname{\text{$C$}-Gid}_RM$. For \mbox{$C=R$} these
  invariants agree with $\operatorname{Gfd}_RM$ and
  $\operatorname{Gid}_RM$, respectively. The $C$-Gorenstein dimensions
  have been studied by e.g.~Sather-Wagstaff, Sharif, and White
  \cite{SSW-TS-DWb, SSW-TS-DWa, SSW-08, SSW-SY}.
\end{bfhpg}

\begin{bfhpg}[Depth and width]
  \label{pg:depth}
  Assume that $(R,\mathfrak{m},k)$ is commutative noetherian local.
  The \emph{depth} of a finitely generated $R$-module $M\neq 0$, that
  is, the length of a maximal $M$-regular sequence, can be computed as
  \begin{displaymath}
    \depth_RM = \inf\{ m \in \ZZ \,|\, \Ext^m_R(k,M) \neq 0\}.
  \end{displaymath}
  Foxby \cite{HBF79} defines the depth of an arbitrary $R$-module $M$
  by the equality above, and Yassemi \cite{SYs98b} studies the dual
  notion of \emph{width}, which is defined by
  \begin{displaymath}
    \width_RM = \inf\{ m \in \ZZ \,|\, \Tor_m^R(k,M) \neq 0\}.
  \end{displaymath}
  Note that $\depth_R0=\width_R0=\infty$.
\end{bfhpg}

\section{Duality pairs} \label{sec:duality}

In this section we define duality pairs and give several examples.  In
the next section we will prove how suitable duality pairs induce
cotorsion pairs. For unexplained notions and notation, the reader is
referred to \secref{pre}.

\begin{dfn}
  \label{dfn:complemented}
  A \emph{duality pair} over $R$ is a pair $(\s{M},\s{C})$, where
  $\s{M}$ is a class of $R$-modules and $\s{C}$ is a class of
  $\Rop$-modules, subject to the following conditions:
  \begin{rqm}

  \item For an $R$--module $M$, one has $M \in \s{M}$ if and only if
    $\Hom_{\ZZ}(M,\QZ) \in \s{C}$.

  \item $\s{C}$ is closed under direct summands and finite direct sums.

  \end{rqm}

  A duality pair $(\s{M},\s{C})$ is called \emph{(co)product-closed} if the
  class $\s{M}$ is closed under (co)products in the category of all
  $R$-modules.

  A duality pair $(\s{M},\s{C})$ is called \emph{perfect} if it is
  coproduct-closed, if $\s{M}$ is closed under extensions, and if $R$
  belongs to $\s{M}$.
\end{dfn}

\begin{exa}
  Consider for each integer \mbox{$n \geqslant 0$} the following
  module classes:
  \begin{align*}
    \s{F}_n &= 
    \{ M \in \Mod(R) \,|\, \operatorname{fd}_RM \leqslant n \}, 
    \\
    \s{I}_n &= 
    \{ M \in \Mod(\Rop) \,|\, \operatorname{id}_{\Rop}M \leqslant n \}.
  \end{align*}
  The following examples of duality pairs are well-known.
  \begin{prt}

  \item $(\s{F}_n,\s{I}_n)$ is a perfect duality pair.  If $R$ is
    right coherent then this pair is product-closed by a classical
    result of Chase \thmcite[2.1]{Chase}.

  \item If $R$ is right noetherian then $(\s{I}_n,\s{F}_n)$ is a
    product- and coproduct-closed duality pair (over $\Rop$), cf.~Xu
    \lemcite[3.1.4]{xu} and Bass \thmcite[1.1]{HBs62}.

  \end{prt}
\end{exa}

\begin{exa}
  Let $\s{B}$ be a class of finitely presented $R$-modules. Following
  Lenzing \cite[\S2]{Lenzing} and \dfncite[2.3]{Holm}, we consider the
  class $\s{M}$ of modules with \emph{support} in $\s{B}$, and the
  class $\s{C}$ of modules with \emph{cosupport} in $\s{B}$ defined by:
  \begin{align*}
    \s{M} &= \varinjlim \s{B},
    \\
    \s{C} &= 
    \operatorname{Prod}\{ \Hom_{\ZZ}(B,\QZ) \,|\, B \in \s{B} \}.
  \end{align*}
  Then $(\s{M},\s{C})$ is a coproduct-closed duality pair by
  \prpcite[2.1]{Lenzing} and \thmcite[1.4]{Holm}. For example, if
  $\s{B}$ is the class of all finitely generated projective
  $R$-modules, then $(\s{M},\s{C}) = (\s{F}_0,\s{I}_0)$ by a classical
  result of Lazard \cite{DLz69}.
\end{exa}

\enlargethispage{2ex}

\begin{prp} 
  \label{prp:ABduality}
  Assume that $R$ is commutative and noetherian, and let $C$ be a
  semidualizing $R$-complex. Then one has:
  \begin{prt}

  \item $(\s{A}^C_0,\s{B}^C_0)$ is a perfect and product-closed
    duality pair.

  \item $(\s{B}^C_0,\s{A}^C_0)$ is a product- and coproduct-closed
    duality pair.

  \end{prt}
\end{prp}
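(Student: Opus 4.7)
The strategy rests on two character-module identities in $\s{D}(R)$. For an $R$-module $X$, write $X^+=\Hom_{\ZZ}(X,\QZ)$; then there are natural isomorphisms $\RHom_R(C,M^+)\simeq(C\LTensor_R M)^+$ and $C\LTensor_R N^+\simeq\RHom_R(C,N)^+$. The first is the standard swap adjunction together with injectivity of $\QZ$ over $\ZZ$; the second is a derived tensor-evaluation isomorphism that uses, decisively, that over the Noetherian ring $R$ the semidualizing complex $C$ has finitely generated homology and is therefore represented by a bounded-above complex $P$ of finitely generated projective $R$-modules. In addition, $(-)^+$ is faithfully exact and hence reflects both boundedness and isomorphisms in $\s{D}(R)$.

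The first step is to verify the equivalences $M\in\s{A}^C_0\iff M^+\in\s{B}^C_0$ and $N\in\s{B}^C_0\iff N^+\in\s{A}^C_0$ for $R$-modules $M$ and $N$. The identities above send the boundedness and canonical-map conditions of one class to the corresponding conditions of the other; concretely, applying $(-)^+$ converts the Auslander map $M\to\RHom_R(C,C\LTensor_R M)$ into the Bass map $C\LTensor_R\RHom_R(C,M^+)\to M^+$ (and vice versa), and faithful exactness of $(-)^+$ gives the equivalence. This delivers condition \rqmlbl{1} of \dfnref{complemented} for both $(\s{A}^C_0,\s{B}^C_0)$ and $(\s{B}^C_0,\s{A}^C_0)$.

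For the closure properties I would exploit the representative $P$ above. Each term $P^n$ is finitely generated projective, so $P^n\otimes_R-$ and $\Hom_R(P^n,-)$ commute with arbitrary products and coproducts, and homology stays uniformly bounded when the arguments are modules. It follows that $C\LTensor_R-$ and $\RHom_R(C,-)$ commute with products and coproducts of modules in a way that respects the defining canonical maps, so both $\s{A}^C_0$ and $\s{B}^C_0$ are closed under arbitrary products, coproducts, finite direct sums, and summands. This supplies condition \rqmlbl{2} of \dfnref{complemented} and the (co)product-closure claims in \prtlbl{a} and \prtlbl{b}. For perfection in \prtlbl{a} there remain two items: $R\in\s{A}^C_0$ is immediate from the semidualizing condition, since the relevant map is the homothety $R\to\RHom_R(C,C)$; and $\s{A}^C_0$ is extension-closed by a five-lemma chase on the long exact homology sequences obtained by applying $C\LTensor_R-$ and then $\RHom_R(C,-)$ to a short exact sequence in $\s{A}^C_0$.

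The main technical obstacle is the tensor-evaluation isomorphism $C\LTensor_R N^+\simeq\RHom_R(C,N)^+$. This is not a formal consequence of Hom-tensor adjunction and requires a careful resolution argument; it is precisely here that the Noetherian hypothesis and the finite generation of the homology of $C$ enter essentially, after which the rest of the proposition reduces to routine bookkeeping with derived functors.
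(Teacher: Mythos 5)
Your proof is correct and is essentially the paper's: the paper simply cites Christensen's Lemma~(3.2.9), Christensen--Frankild--Holm's Lemma~5.6, and Christensen's Lemma~(3.1.13), and your arguments reconstruct exactly the content of those citations via the character-module swap and tensor-evaluation identities, a bounded-above resolution of $C$ by finitely generated projectives, and a five-lemma argument, respectively. The only detail that deserves tightening is closure under products: boundedness of $\prod_i(C\LTensor_R M_i)$ requires a uniform amplitude bound on the factors, which follows from $M_i\in\s{A}^C_0$ (the amplitude of $C\LTensor_R M_i$ is then controlled by that of $C$) rather than merely from the $M_i$ being modules, as your phrasing suggests.
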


\begin{proof}
  That $(\s{A}^C_0,\s{B}^C_0)$ and $(\s{B}^C_0,\s{A}^C_0)$ are duality
  pairs follow from (the proof of) \lemcite[(3.2.9)]{lnm}.  That
  $\s{A}^C_0$ and $\s{B}^C_0$ are closed under products and coproducts
  follow from (the proof of) \lemcite[5.6]{CFH-06}.  The class
  $\s{A}^C_0$ clearly contains $R$, and it is closed under extensions
  by (the proof of) \lemcite[(3.1.13)]{lnm}.  
\end{proof}

\begin{lem}
  \label{lem:GF-GI}
  Consider for each integer \mbox{$n \geqslant 0$} the following
  module classes:
  \begin{align*}
    \s{GF}_n &= 
    \{ M \in \Mod(R) \,|\, \operatorname{Gfd}_RM \leqslant n \}, 
    \\
    \s{GI}_n &= 
    \{ M \in \Mod(\Rop) \,|\, \operatorname{Gid}_{\Rop}M \leqslant n \}.
  \end{align*}
  Then the following conclusions hold:
  \begin{prt}

  \item If $R$ is right coherent then $(\s{GF}_n,\s{GI}_n)$ is a
    perfect duality pair. If $R$ is com\-mu\-tative noetherian with a
    dualizing complex then this duality pair is pro\-duct-closed.

  \item If $R$ is commutative noetherian with a dualizing complex then
    $(\s{GI}_n,\s{GF}_n)$ is a product- and coproduct-closed duality
    pair (over $\Rop$).
  \end{prt}
\end{lem}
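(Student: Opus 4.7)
The plan is to verify, for each of the two pairs, the axioms \rqmlbl{1} and \rqmlbl{2} of \dfnref{complemented} together with the relevant extra closure properties. In both parts the decisive input will be the character-module comparison between Gorenstein flat and Gorenstein injective dimensions; all else will be standard.

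For part \prtlbl{a}, axiom \rqmlbl{1} asks that $\operatorname{Gfd}_R M \leqslant n$ if and only if $\operatorname{Gid}_{\Rop} M^+ \leqslant n$, where $M^+=\Hom_{\ZZ}(M,\QZ)$. When $R$ is right coherent this will follow from the classical theorem (Holm) that an $R$-module is Gorenstein flat precisely when its character module is Gorenstein injective over $\Rop$; the passage from level $0$ to level $n$ is a short dimension-shifting argument based on an exact sequence $0 \to K \to F_{n-1} \to \cdots \to F_0 \to M \to 0$ with $F_i$ flat and $K$ Gorenstein flat, together with the dual injective coresolution of $M^+$. Axiom \rqmlbl{2}, closure of $\s{GI}_n$ under finite direct sums and summands, is a standard property of Gorenstein injective dimension. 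For perfection I would note that $R\in\s{GF}_n$ (since $R$ is flat), that $\s{GF}_n$ is closed under extensions (standard), and that it is coproduct-closed because the class of Gorenstein flat modules is. Under the extra hypothesis that $R$ is commutative noetherian with a dualizing complex $D$, product-closedness of $\s{GF}_n$ will follow from the known interpretation of finite Gorenstein flat dimension in terms of the Auslander class $\s{A}_0^D$, combined with product-closedness of $\s{A}_0^D$ from \partprpref{ABduality}{a}.

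For part \prtlbl{b}, axiom \rqmlbl{1} reads: for an $\Rop$-module $N$, $\operatorname{Gid}_{\Rop} N \leqslant n$ if and only if $\operatorname{Gfd}_R N^+ \leqslant n$. The forward direction is once again a dimension-shift argument; for the converse I would invoke the dualizing complex to bring in the Bass-class description of finite Gorenstein injective dimension, and then combine it with the character-module duality of part \prtlbl{a} (applied to $N^+$, which is a left $R$-module) to reverse the implication. Axiom \rqmlbl{2}, closure of $\s{GF}_n$ under finite direct sums and summands, is standard. Product-closedness of $\s{GI}_n$ is a standard feature of Gorenstein injective dimension; coproduct-closedness will use the dualizing complex in an essential way, factoring through the Bass class $\s{B}_0^D$, which is coproduct-closed by \partprpref{ABduality}{b}.

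The hardest step is axiom \rqmlbl{1} in part \prtlbl{b}, namely propagating Gorenstein flatness of a character module back to Gorenstein injectivity of the original module. This is precisely where the dualizing complex becomes indispensable: it supplies the Foxby and Auslander--Bass machinery needed to invert the character-module functor at the level of Gorenstein dimensions. Once the two character-module equivalences are in place, the remaining closure properties will reduce to standard structural facts about Gorenstein dimensions and about Auslander and Bass classes.
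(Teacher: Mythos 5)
Your strategy matches the paper's in broad outline: you verify axioms \rqmlbl{1} and \rqmlbl{2} of \dfnref{complemented} via the character-module comparison between $\operatorname{Gfd}$ and $\operatorname{Gid}$ (which, for part \prtlbl{a}, is \prpcite[3.11]{HHl04a} over a right coherent ring, and for part \prtlbl{b} is the proof of \prpcite[5.1]{CFH-06}), and the coherence-based facts---closure of $\s{GF}_n$ under coproducts, extensions, finite sums and summands, and that $R\in\s{GF}_n$---are indeed in \cite{HHl04a} as you indicate. So the duality-pair and perfectness portions of part \prtlbl{a} are essentially the paper's argument.

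However, your argument for (co)product-closedness under the dualizing-complex hypothesis has a genuine gap. You propose to deduce product-closedness of $\s{GF}_n$ from product-closedness of the Auslander class $\s{A}_0^D$, and coproduct-closedness of $\s{GI}_n$ from coproduct-closedness of the Bass class $\s{B}_0^D$. But over a commutative noetherian ring with dualizing complex $D$, the class $\s{A}_0^D$ is exactly the class of $R$-modules of \emph{finite} Gorenstein flat dimension, with no control on the actual value: $\s{A}_0^D = \bigcup_m \s{GF}_m$, which equals $\s{GF}_m$ once $m$ exceeds the (finite) uniform bound on Gorenstein flat dimension over $R$. Thus from $\operatorname{Gfd}_R M_i \leqslant n$ for all $i$ in some family, closure of $\s{A}_0^D$ under products gives only $\operatorname{Gfd}_R \prod M_i < \infty$, not the required $\operatorname{Gfd}_R \prod M_i \leqslant n$; for $n$ below the uniform bound the argument as written does not close, and the analogous problem arises on the Bass-class side for $\s{GI}_n$ and coproducts. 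The missing ingredient is precisely what \thmcite[5.7]{CFH-06} (and \thmcite[6.9]{CFH-06} on the injective side) supply, and what the paper cites: over a ring with a dualizing complex, Gorenstein flat (resp.\ injective) dimension is \emph{preserved}---not merely kept finite---by products (resp.\ coproducts). Proving that sharper statement uses the Foxby equivalence to translate the problem to ordinary flat and injective dimensions, which is a stronger mechanism than bare closure of $\s{A}_0^D$ and $\s{B}_0^D$ under (co)products.
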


\begin{proof}
  \proofoftag{a} Since $R$ is right coherent, it follows by
  \prpcite[3.11]{HHl04a} that the given pair is a duality pair.  The
  class $\s{GF}_n$ is closed under coproducts by
  \prpcite[3.13]{HHl04a}, and to see that $\s{GF}_n$ is closed under
  extensions one applies \thmcite[3.14 and 3.15]{HHl04a}. It is clear
  that $R$ belongs to $\s{GF}_n$.

  If $R$ is commutative and noetherian with a dualizing complex then
  $\s{GF}_n$ is closed under products by \thmcite[5.7]{CFH-06}.

  \proofoftag{b} Since $R$ is commutative with a dualizing complex, it
  follows by (the proof of) \prpcite[5.1]{CFH-06} that the given pair
  is a duality pair. The class $\s{GI}_n$ is closed under products by
  \thmcite[2.6]{HHl04a}, and it is closed under coproducts by
  \thmcite[6.9]{CFH-06}.
\end{proof}

\begin{prp}
  \label{prp:GFC}
  Assume that $R$ is commutative and noetherian, and let $C$ be a
  semidualizing $R$-module. Consider for each \mbox{$n \geqslant 0$}
  the following module classes:
  \begin{align*}
    \s{GF}^C_n &= \{ M \in \Mod(R) \,|\,
    \operatorname{\text{$C$}-Gfd}_RM \leqslant n \},
    \\
    \s{GI}^C_n &= \{ M \in \Mod(R) \,|\,
    \operatorname{\text{$C$}-Gid}_RM \leqslant n \}.
  \end{align*}
  Then one has the following conclusions:
  \begin{prt}

  \item $(\s{GF}^C_n,\s{GI}^C_n)$ is a perfect duality pair. If $R$
    has a dualizing complex then this duality pair is product-closed.

  \item If $R$ has a dualizing complex then $(\s{GI}^C_n,\s{GF}^C_n)$
    is a product- and coproduct-closed duality pair.

  \end{prt}
\end{prp}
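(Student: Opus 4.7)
The plan is to run the argument of \lemref{GF-GI} with $C$-relative dimensions in place of the ordinary Gorenstein dimensions. All the ingredients -- a character-module duality relating $\operatorname{\text{$C$}-Gfd}$ and $\operatorname{\text{$C$}-Gid}$, closure of $\s{GF}^C_n$ under coproducts and extensions, closure under products in the presence of a dualizing complex, and the analogous closure properties of $\s{GI}^C_n$ -- are available (or straightforwardly extracted) from \cite{HHlPJr06} together with the work of Sather-Wagstaff, Sharif and White \cite{SSW-TS-DWb,SSW-TS-DWa,SSW-08,SSW-SY} and \cite{CFH-06}.

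For part~\prtlbl{a}, I would first verify condition~\rqmlbl{1} of \dfnref{complemented}, namely the character-module duality
$$\operatorname{\text{$C$}-Gfd}_RM \leqslant n \iff \operatorname{\text{$C$}-Gid}_R\Hom_{\ZZ}(M,\QZ) \leqslant n,$$
which is the $C$-relative analog of \prpcite[3.11]{HHl04a}. Condition~\rqmlbl{2} is immediate from the additivity of $C$-Gorenstein injective dimension on finite direct sums and direct summands. Perfection requires three further observations: that $R\in\s{GF}^C_n$ (which follows from $R\in\s{A}^C_0$ together with $\operatorname{fd}_RR=0$), that $\s{GF}^C_n$ is closed under coproducts (the $C$-relative version of \prpcite[3.13]{HHl04a}, since $C\LTensor_R(-)$ commutes with coproducts), and that $\s{GF}^C_n$ is closed under extensions (the $C$-relative version of \thmcite[3.14 and 3.15]{HHl04a}). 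Finally, when $R$ has a dualizing complex, product-closure of $\s{GF}^C_n$ is the $C$-relative version of \thmcite[5.7]{CFH-06}.

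For part~\prtlbl{b}, the same character-module duality, read with the roles of $\s{GF}^C_n$ and $\s{GI}^C_n$ swapped, takes care of condition~\rqmlbl{1}, and condition~\rqmlbl{2} is again additivity. Product-closure of $\s{GI}^C_n$ is the $C$-relative version of \thmcite[2.6]{HHl04a}, and coproduct-closure is the $C$-relative version of \thmcite[6.9]{CFH-06}; both use the dualizing-complex hypothesis.

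The main technical obstacle is verifying that each of the results cited above from \cite{HHl04a} and \cite{CFH-06} actually admits the claimed $C$-relative generalization in the Sather-Wagstaff--Sharif--White literature. A clean, uniform way to fill any gap is to invoke the Foxby equivalence $C\LTensor_R(-)\colon\s{A}^C_0\to\s{B}^C_0$ with quasi-inverse $\RHom_R(C,-)\colon\s{B}^C_0\to\s{A}^C_0$: this equivalence transports $C$-Gorenstein flatness to ordinary Gorenstein flatness and $C$-Gorenstein injectivity to ordinary Gorenstein injectivity, so each closure assertion for $\s{GF}^C_n$ or $\s{GI}^C_n$ can, if necessary, be reduced to the case $C=R$ already handled in \lemref{GF-GI}.
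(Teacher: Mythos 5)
Your high-level plan --- reduce to \lemref{GF-GI} --- is the same idea the paper uses, but the mechanism you propose for the reduction does not work, and you leave the load-bearing steps unjustified. You repeatedly invoke ``the $C$-relative version'' of \prpcite[3.11]{HHl04a}, \prpcite[3.13]{HHl04a}, \thmcite[3.14 and 3.15]{HHl04a}, \thmcite[5.7]{CFH-06}, \thmcite[2.6]{HHl04a} and \thmcite[6.9]{CFH-06} without establishing them, and your proposed uniform gap-filler is incorrect: the Foxby equivalence $C\LTensor_R(-)$ between $\s{A}^C_0$ and $\s{B}^C_0$ does \emph{not} transport $C$-Gorenstein flatness to ordinary Gorenstein flatness. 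The defining complete resolutions for $C$-Gorenstein flat modules are exact complexes whose left half consists of flat modules and whose right half consists of modules of the form $C\otimes_R(\text{flat})$; applying $C\otimes_R-$ or $\RHom_R(C,-)$ to such a complex destroys one half of this structure rather than turning it into a complete flat resolution. So that reduction to the $C=R$ case of \lemref{GF-GI} does not go through.

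The device the paper actually uses, and which you are missing, is the trivial extension ring $R\ltimes C$. By \thmcite[2.16]{HHlPJr06} one has $\operatorname{\text{$C$}-Gfd}_RM = \operatorname{Gfd}_{R\ltimes C}M$ and $\operatorname{\text{$C$}-Gid}_RM = \operatorname{Gid}_{R\ltimes C}M$ for every $R$-module $M$. This identification, not the Foxby equivalence, is what converts $C$-relative Gorenstein dimensions into ordinary Gorenstein dimensions over a (different) commutative noetherian ring. With it, the character-module duality is exactly \prpcite[3.11]{HHl04a} over $R\ltimes C$, every closure property you need is \partlemref{GF-GI}{a} or \prtlbl{b} applied to $R\ltimes C$, and the dualizing-complex hypothesis passes to $R\ltimes C$ because it is a module-finite extension of $R$. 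Without this step (or a genuine verification of each $C$-relative analog from the Sather-Wagstaff--Sharif--White literature, which you do not carry out), the proof is incomplete.
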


\begin{proof}
  \proofoftag{a} We denote by $R\ltimes C$ the trivial extension of
  $R$ by $C$, cf.~\cite[\S 3.3]{BruHer}. For any $R$-module $M$, it
  follows by \thmcite[2.16]{HHlPJr06} that
  \begin{displaymath}
    \tag{\text{$\dagger$}}
    \operatorname{\text{$C$}-Gfd}_RM = 
    \operatorname{Gfd}_{R\ltimes C}M
    \qquad \text{and} \qquad
    \operatorname{\text{$C$}-Gid}_RM = 
    \operatorname{Gid}_{R\ltimes C}M.
  \end{displaymath}
  Combining this with \prpcite[3.11]{HHl04a} one gets that
  \begin{align*}
    \operatorname{\text{$C$}-Gid}_R\Hom_{\ZZ}(M,\QZ) 
    &= 
    \operatorname{Gid}_{R\ltimes C}\Hom_{\ZZ}(M,\QZ) \\
    &= 
    \operatorname{Gfd}_{R\ltimes C}M \\
    &= 
    \operatorname{\text{$C$}-Gfd}_RM,
  \end{align*}
  from which we conclude that $(\s{GF}^C_n,\s{GI}^C_n)$ is a duality
  pair.  By \partlemref{GF-GI}{a}, the class \mbox{$\s{GF}_n(R\ltimes
    C)$} is closed under coproducts and extensions; and combining this
  with the first equality in $(\dagger)$, it follows that $\s{GF}^C_n$
  is closed under coproducts and extensions as well. Also note that
  $R$ belongs to $\s{GF}^C_n$ by \rescite[exa.~2.8(c)]{HHlPJr06}.

  If $R$ has a dualizing complex then so has \mbox{$R\ltimes C$},
  since it is a module finite extension of $R$. Hence
  \mbox{$\s{GF}_n(R\ltimes C)$} is closed under products by
  \partlemref{GF-GI}{a}, and by the first equality in $(\dagger)$ we
  then conclude that $\s{GF}^C_n$ is closed under products.

  \proofoftag{b} Similar to the proof of part \prtlbl{a}, but using
  the second equality in $(\dagger)$ instead of the first, and using
  that for any $R$-module $M$ one has:
  \begin{align*}
    \operatorname{\text{$C$}-Gfd}_R\Hom_{\ZZ}(M,\QZ) 
    &= 
    \operatorname{Gfd}_{R\ltimes C}\Hom_{\ZZ}(M,\QZ) \\
    &= 
    \operatorname{Gid}_{R\ltimes C}M \\
    &= 
    \operatorname{\text{$C$}-Gid}_RM.
  \end{align*}
  The equalities above follow from $(\dagger)$ and (the proof of)
  \prpcite[5.1]{CFH-06}.
\end{proof}

\begin{prp}
  Let $(R,\mathfrak{m},k)$ be commutative noetherian local. Consider
  for each integer \mbox{$n \geqslant 0$} the following module
  classes:
  \begin{align*}
    \s{D}_n &= 
    \{ M \in \Mod(R) \,|\, \depth_RM \geqslant n \}, 
    \\
    \s{W}_n &= 
    \{ M \in \Mod(R) \,|\, \width_RM \geqslant n \}. 
  \end{align*}
  Then the following conclusions hold:
  \begin{prt}
  \item $(\s{D}_n,\s{W}_n)$ is a product- and coproduct-closed duality
    pair. If \mbox{$n \leqslant \depth R$} then this duality pair is
    perfect.
  \item $(\s{W}_n,\s{D}_n)$ is a product- and coproduct-closed duality
    pair.
  \end{prt}
\end{prp}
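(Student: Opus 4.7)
The plan is to derive everything from two Pontryagin-duality identities that interchange depth and width. First I would establish the natural isomorphisms
\begin{align*}
\Ext^m_R(k,\Hom_{\ZZ}(M,\QZ)) &\cong \Hom_{\ZZ}(\Tor_m^R(k,M),\QZ), \\
\Tor_m^R(k,\Hom_{\ZZ}(M,\QZ)) &\cong \Hom_{\ZZ}(\Ext_R^m(k,M),\QZ)
\end{align*}
for every $R$-module $M$. The first is the standard Hom-tensor adjunction; the second uses that, since $R$ is noetherian, $k$ admits a resolution $P_\bullet$ by finitely generated free $R$-modules, so that componentwise $P_j \otimes_R \Hom_{\ZZ}(M,\QZ) \cong \Hom_{\ZZ}(\Hom_R(P_j,M),\QZ)$, and passing to homology yields the displayed isomorphism because $\Hom_{\ZZ}(-,\QZ)$ is exact. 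Since $\QZ$ is a faithful injective cogenerator of abelian groups, these isomorphisms immediately translate into the identities $\depth_R \Hom_{\ZZ}(M,\QZ) = \width_R M$ and $\width_R \Hom_{\ZZ}(M,\QZ) = \depth_R M$, which is condition (1) of \dfnref{complemented} for both pairs $(\s{D}_n,\s{W}_n)$ and $(\s{W}_n,\s{D}_n)$.

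Next I would verify that $\s{D}_n$ and $\s{W}_n$ are each closed under arbitrary products and coproducts. The class $\s{D}_n$ is product-closed because $\Ext$ commutes with products in its second argument, and coproduct-closed because $k$ has a resolution by finitely generated free modules, so $\Hom_R(R^a,-)$ preserves direct sums. For $\s{W}_n$, Tor always commutes with coproducts, while product-closure uses the same resolution: since each $P_j$ is a finite direct sum of copies of $R$, one has $P_j \otimes_R \prod_i M_i \cong \prod_i (P_j \otimes_R M_i)$, and as products in the category of abelian groups are exact they commute with the homology functor, yielding $\Tor_m^R(k,\prod_i M_i) \cong \prod_i \Tor_m^R(k,M_i)$. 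Closure under finite direct sums and under direct summands follows as a special case, so condition (2) of \dfnref{complemented} holds for both pairs.

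To finish part~\prtlbl{a} I would observe that coproduct-closure has already been shown, $R \in \s{D}_n$ whenever $n \leqslant \depth R$ is immediate from the definition, and closure of $\s{D}_n$ under extensions is a standard diagram chase on the long exact sequence of $\Ext^\ast_R(k,-)$ applied to $0 \to A \to B \to C \to 0$: if the outer Ext-groups vanish in degrees $m < n$, so does the middle one.

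The main technical obstacle is the product-closure of $\s{W}_n$, since Tor does not commute with arbitrary products in general. The argument must exploit the fact that $R$ is noetherian local so that $k$ has a resolution by finitely generated free modules, together with the exactness of products in the category of abelian groups. Everything else in the proposition reduces to routine bookkeeping with the two Pontryagin-duality identities above.
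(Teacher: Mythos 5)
Your proposal is correct and follows the same route the paper takes: the paper simply asserts the two duality identities $\depth_R\Hom_{\ZZ}(M,\QZ)=\width_RM$ and $\width_R\Hom_{\ZZ}(M,\QZ)=\depth_RM$ and declares all closure properties ``trivial from the definitions,'' whereas you derive those identities from the Pontryagin duality of $\Ext$ and $\Tor$ and fill in the details of closure under (co)products, summands, and extensions. In particular, you correctly flag the one point that is not quite trivial---product-closure of $\s{W}_n$---and supply the standard argument using a resolution of $k$ by finitely generated free modules together with exactness of products.
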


\begin{proof}
  For every $R$-module $M$ one has:
  \begin{align*}
    \depth_R\Hom_{\ZZ}(M,\QZ)&=\width_RM, \\
    \width_R\Hom_{\ZZ}(M,\QZ)&=\depth_RM,
  \end{align*}
  from which it follows that the pairs in \prtlbl{a} and \prtlbl{b}
  are duality pairs. It is trivial from the definitions of depth and
  width, cf.~\pgref{pg:depth}, that $\s{D}_n$ and $\s{W}_n$ are closed
  under products, coproducts, direct summands and extensions.
\end{proof}

We end this section by noting that the next easily proved result can
be applied to construct new duality pairs from existing ones.

\begin{prp}
  Let $(\s{M}_\mu,\s{C}_\mu)$ be a family of duality pairs over $R$.
  Then their inter\-sec\-tion $(\bigcap \s{M}_\mu,\bigcap \s{C}_\mu)$
  is also a duality pair.  Furthermore, the following hold:
  \begin{prt}
  \item If each $(\s{M}_\mu,\s{C}_\mu)$ is (co)product-closed then so is
    $(\bigcap \s{M}_\mu,\bigcap \s{C}_\mu)$.
  \item If each $(\s{M}_\mu,\s{C}_\mu)$ is perfect then so is $(\bigcap
    \s{M}_\mu,\bigcap \s{C}_\mu)$. \qed
  \end{prt}
\end{prp}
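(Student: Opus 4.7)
The plan is to verify each clause of Definition~\ref{dfn:complemented} for the intersection by simply commuting the intersection with each defining property; everything reduces to the fact that a universal quantifier over $\mu$ commutes with membership in intersections.

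First I would check the two axioms of a duality pair for $(\bigcap \s{M}_\mu, \bigcap \s{C}_\mu)$. For axiom~(1), given an $R$-module $M$, one has the chain of equivalences
\begin{align*}
  M \in \textstyle\bigcap \s{M}_\mu
  &\iff M \in \s{M}_\mu \text{ for all } \mu \\
  &\iff \Hom_{\ZZ}(M,\QZ) \in \s{C}_\mu \text{ for all } \mu \\
  &\iff \Hom_{\ZZ}(M,\QZ) \in \textstyle\bigcap \s{C}_\mu,
\end{align*}
where the middle equivalence uses axiom~(1) for each individual pair $(\s{M}_\mu,\s{C}_\mu)$. For axiom~(2), note that if $X \in \bigcap \s{C}_\mu$ and $Y$ is a direct summand of $X$, then $Y \in \s{C}_\mu$ for every $\mu$ by summand-closure of each $\s{C}_\mu$, hence $Y \in \bigcap \s{C}_\mu$. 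The same argument handles finite direct sums.

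For part~\prtlbl{a}, closure under (co)products of $\bigcap \s{M}_\mu$ follows by an identical argument: a (co)product of modules in $\bigcap \s{M}_\mu$ lies in each $\s{M}_\mu$ by hypothesis, and so in the intersection. For part~\prtlbl{b}, perfectness requires three things in addition to being a duality pair: coproduct-closedness (already handled), closure of $\bigcap \s{M}_\mu$ under extensions (verified by the same principle, since an extension of modules in $\bigcap \s{M}_\mu$ lies in each $\s{M}_\mu$), and $R \in \bigcap \s{M}_\mu$ (which holds since $R \in \s{M}_\mu$ for each $\mu$).

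There is no serious obstacle here; the only "thought" involved is observing that every closure property appearing in the definition is of the form "for all inputs in the class, the output lies in the class," which is automatically preserved under intersection. This is presumably why the authors label the result \emph{easily proved} and provide only a \verb|\qed|.
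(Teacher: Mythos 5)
Your proof is correct, and since the paper gives no proof (the statement ends with \verb|\qed|, indicating the authors regard it as immediate), your verification is exactly the argument the authors have in mind: every defining condition is a universally quantified closure property and hence commutes with intersection.
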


\enlargethispage{2ex}

\section{Existence of preenvelopes, covers, and cotorsion pairs}

The main result of this section, \thmref{main}, shows that the left
half of a duality pair is ``often'' preenveloping and covering.  We
apply this result to a few of the duality pairs found in
\secref{duality}.

\begin{thm} \label{thm:main}
  Let $(\s{M},\s{C})$ be a duality pair. Then $\s{M}$ is closed under pure
  sub\-modules, pure quotients, and pure extensions. Furthermore, the
  following hold:
  \begin{prt}
  \item If $(\s{M},\s{C})$ is product-closed then $\s{M}$ is preenveloping.
  \item If $(\s{M},\s{C})$ is coproduct-closed then $\s{M}$ is covering.
  \item If $(\s{M},\s{C})$ is perfect then $(\s{M},\s{M}^\perp)$ is a perfect
    cotorsion pair.
  \end{prt}
\end{thm}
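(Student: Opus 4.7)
The plan is to first establish the three purity assertions, then to deduce (a), (b), and (c) by invoking standard existence theorems for (pre)envelopes and covers, using the duality-pair structure to supply a cardinality-bounded generating set whenever needed. For the purity closure, I apply $\Hom_\ZZ(-,\QZ)$ to a pure exact sequence $0\to A\to B\to C\to 0$: by the definition of purity, the resulting sequence is split exact. Hence $\Hom_\ZZ(A,\QZ)$ and $\Hom_\ZZ(C,\QZ)$ are direct summands of $\Hom_\ZZ(B,\QZ)$, and in fact $\Hom_\ZZ(B,\QZ)\cong\Hom_\ZZ(A,\QZ)\oplus\Hom_\ZZ(C,\QZ)$. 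Using the biconditional $M\in\s{M}\iff\Hom_\ZZ(M,\QZ)\in\s{C}$, closure of $\s{C}$ under direct summands yields closure of $\s{M}$ under pure submodules and pure quotients, while closure of $\s{C}$ under finite direct sums yields closure of $\s{M}$ under pure extensions.

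For (a), I would invoke the theorem of Rada--Saor\'\i n (in the spirit of Stenstr\"om): any class of modules closed under direct products and under pure submodules is preenveloping. Both hypotheses now hold, so $\s{M}$ is preenveloping. For (b), every direct limit $\varinjlim M_i$ is a pure quotient of $\bigoplus M_i$; hence coproduct-closure together with pure-quotient-closure gives closure of $\s{M}$ under arbitrary direct limits. A L\"owenheim--Skolem argument, made possible by the pure-submodule closure, then produces a set $\s{S}\subseteq\s{M}$ of representatives of cardinality at most $|R|+\aleph_0$ with the property that every $M\in\s{M}$ is a filtered colimit of elements of $\s{S}$. El Bashir's theorem on covers via directed colimits from a set now shows that $\s{M}$ is covering.

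For (c), the perfect hypothesis additionally gives $R\in\s{M}$ together with closure under extensions and coproducts. Feeding the set $\s{S}$ from (b) into the Eklof--Trlifaj small-object argument produces a complete cotorsion pair $({}^\perp(\s{S}^\perp),\s{S}^\perp)$; and because $\s{M}$ is closed under transfinite extensions (ordinary extensions together with direct limits) and under direct summands (in particular pure submodules), one identifies ${}^\perp(\s{S}^\perp)$ with $\s{M}$. Hence $(\s{M},\s{M}^\perp)$ is a complete cotorsion pair, and the direct-limit-closure of $\s{M}$ upgrades completeness to perfection via the standard argument of Enochs and Xu, so that $\s{M}$ is covering and $\s{M}^\perp$ is enveloping.

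The hardest step will be producing the set $\s{S}$ of small representatives \emph{inside} $\s{M}$ in part (b): the L\"owenheim--Skolem construction of pure submodules of bounded cardinality must be carried out, and then, via the purity closure from the first paragraph, the constructed pure submodule must be shown to land inside $\s{M}$. Once this set is in hand, the remaining steps reduce to invoking the appropriate general theorems.
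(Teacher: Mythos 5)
Your proof of the purity statement and of part (a) is exactly the paper's argument: apply $\Hom_{\ZZ}(-,\QZ)$ to a pure exact sequence to obtain a split sequence, use condition (2) of the duality pair definition to control $\s{C}$, translate back via condition (1), and then invoke Rada--Saor\'{\i}n for preenvelopes from product-closure plus pure-submodule-closure.

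For parts (b) and (c) the paper's route is much shorter than yours: it simply cites two theorems from the companion paper \cite{Holm-Jorgensen} (Thm.~2.5: a class closed under pure quotients and coproducts is covering; Thm.~3.4: a class closed under pure submodules, pure quotients, coproducts and extensions and containing $R$ is the left half of a perfect cotorsion pair), and applies them directly with the closure properties already in hand. What you do is reconstruct what the proofs of those two theorems look like: direct limits are pure quotients of coproducts; a L\"owenheim--Skolem/Hill-type argument using pure-submodule and pure-quotient closure gives a deconstructing set $\s{S} \subseteq \s{M}$; El Bashir's theorem then yields covers; and Eklof--Trlifaj plus closure under transfinite extensions and direct summands yields the cotorsion pair, upgraded to perfect by the Enochs--Xu argument. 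This reconstruction is consistent with the literature and with how \cite{Holm-Jorgensen} actually argues, so the content is the same --- you are essentially re-proving the cited black boxes rather than citing them. The only thing to keep straight in your sketch is that El Bashir's theorem for (b) needs directed colimit presentations while the Eklof--Trlifaj/Kaplansky step in (c) needs honest $\s{S}$-filtrations; your deconstruction argument produces the latter (hence also the former), so this is a presentational point rather than a gap.
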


\begin{proof}
  First we prove that $\s{M}$ is closed under pure sub\-modules, pure
  quotients, and pure extensions, that is, given a pure exact sequence
  of $R$-modules,
  \begin{displaymath}
    0 \longrightarrow M' \longrightarrow M \longrightarrow M''
    \longrightarrow 0, 
  \end{displaymath}
  then $M$ is in $\s{M}$ if and only if $M', M''$ are in $\s{M}$.  Applying
  $\Hom_{\ZZ}(-,\QZ)$ to the sequence above, we get by Jensen and
  Lenzing \thmcite[6.4]{CUJ-HL-book} a split exact sequence,
  \begin{displaymath}
    0 \longrightarrow \Hom_{\ZZ}(M'',\QZ) \longrightarrow 
    \Hom_{\ZZ}(M,\QZ) \longrightarrow  
    \Hom_{\ZZ}(M',\QZ) \longrightarrow 0.
  \end{displaymath}
  By 
  \partdfnref[]{complemented}{2} it follows that $\Hom_{\ZZ}(M,\QZ)$
  is in $\s{C}$ if and only if $\Hom_{\ZZ}(M',\QZ)$ and
  $\Hom_{\ZZ}(M'',\QZ)$ both are in $\s{C}$.  The desired conclusion now
  follows by \partdfnref[]{complemented}{1}.

  \proofoftag{a} We have proved that $\s{M}$ is closed under pure
  submodules. Since $\s{M}$ is also closed under products by assumption,
  it follows by Rada and Saor{\'i}n \corcite[3.5(c)]{Rada-Saorin} that
  $\s{M}$ is preenveloping.

  \proofoftag{b} We have proved that $\s{M}$ is closed under pure
  quotients. By assumption, $\s{M}$ is also closed under coproducts,
  and therefore it follows by \thmcite[2.5]{Holm-Jorgensen} that
  $\s{M}$ is covering.

  \proofoftag{c} We have proved that $\s{M}$ is closed under pure
  submodules and pure quotients. By assumption, $\s{M}$ is also closed
  under coproducts and extensions, and $R$ belongs to $\s{M}$.  Thus
  \thmcite[3.4]{Holm-Jorgensen} implies that $(\s{M},\s{M}^\perp)$ is a
  perfect cotorsion pair.
\end{proof}

As mentioned in the introduction, in the case where $C$ is a
semidualizing \textsl{module} (as opposed to as semidualizing
\textsl{complex}), the following result---except the first assertion
in part (b)---is proved by Enochs et al.~\cite{EEE-HH}.

\begin{thm}
  \label{thm:Acotorsion}
  Assume that $R$ is commutative and noetherian, and let $C$ be a
  semidualizing $R$-complex. Then the following conclusions hold:
  \begin{prt}
  \item $(\s{A}^C_0,(\s{A}^C_0)^\perp)$ is a perfect cotorsion pair,
    in particular, the class $\s{A}^C_0$ is covering. Furthermore,
    $\s{A}^C_0$ is preenveloping.
  \item The class $\s{B}^C_0$ is covering and preenveloping.
  \end{prt}
\end{thm}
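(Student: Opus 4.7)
My plan is that this theorem is essentially a packaged application of \thmref{main} to the duality pairs built out of the Auslander and Bass classes in \prpref{ABduality}; no new heavy lifting is required, only matching up the hypotheses of the general machinery with the concrete input.

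For part \prtlbl{a}, I would invoke \partprpref{ABduality}{a}, which tells us that $(\s{A}^C_0,\s{B}^C_0)$ is a \emph{perfect} and \emph{product-closed} duality pair. Perfection immediately feeds into \partthmref{main}{c} to yield that $(\s{A}^C_0,(\s{A}^C_0)^\perp)$ is a perfect cotorsion pair; in particular $\s{A}^C_0$ is covering (this is part of the definition of a perfect cotorsion pair, cf.\ \pgref{pg:C}). For the preenveloping assertion I would separately apply \partthmref{main}{a} to the same duality pair, using the product-closed property.

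For part \prtlbl{b}, the trick is to switch sides of the duality pair: by \partprpref{ABduality}{b}, the pair $(\s{B}^C_0,\s{A}^C_0)$ is both product- and coproduct-closed. Applying \partthmref{main}{a} gives that $\s{B}^C_0$ is preenveloping, and applying \partthmref{main}{b} gives that $\s{B}^C_0$ is covering.

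The only thing to watch is that the conclusions of \thmref{main} are about the \emph{left} half of the duality pair, so I must be careful to set up each statement about $\s{A}^C_0$ or $\s{B}^C_0$ using the pair in which that class appears on the left. After that observation the proof is essentially a two-line bookkeeping argument, and I do not foresee any substantive obstacle — the real content has already been absorbed into \prpref{ABduality} and \thmref{main}.
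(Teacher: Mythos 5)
Your proposal is correct and follows exactly the same route as the paper: apply \partprpref{ABduality}{a} together with \partthmref{main}{c,a} for part \prtlbl{a}, and \partprpref{ABduality}{b} together with \partthmref{main}{a,b} for part \prtlbl{b}. The observation about keeping the relevant class on the left side of the duality pair is also precisely how the paper structures the argument.
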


\begin{proof}
  \proofoftag{a} By \partprpref{ABduality}{a}, the class $\s{A}^C_0$
  is the left half of a perfect and product-closed duality pair. Thus
  the conclusions follow from \partthmref{main}{c,a}.

  \proofoftag{b} By \partprpref{ABduality}{b}, the class $\s{B}^C_0$
  is the left half of a product- and coproduct-closed duality pair.
  The conclusions follow from \partthmref{main}{a,b}.
\end{proof}

\begin{thm}
  \label{thm:GFCcotorsion}
  Assume that $R$ is commutative and noetherian, let $C$ be a
  semi\-dual\-izing $R$-module, and let $n \geqslant 0$ be an integer.
  Then one has:
  \begin{prt}
  \item $(\s{GF}^C_n,(\s{GF}^C_n)^\perp)$ is a perfect cotorsion pair,
    in particular, $\s{GF}^C_n$ is covering. If, in addition, $R$ has a
    dualizing complex then $\s{GF}^C_n$ is preenveloping.
  \item If $R$ has a dualizing complex then $\s{GI}^C_n$ is covering
    and preenveloping.
  \end{prt}
\end{thm}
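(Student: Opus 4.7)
The plan is to derive both parts directly from the machinery already assembled: Proposition \ref{prp:GFC}, which identifies $(\s{GF}^C_n,\s{GI}^C_n)$ and $(\s{GI}^C_n,\s{GF}^C_n)$ as duality pairs with the appropriate closure properties, together with Theorem \ref{thm:main}, which passes from these closure properties to covering, preenveloping, and cotorsion-pair conclusions. There is essentially no new work: everything has been reduced to bookkeeping.

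For part \prtlbl{a}, I would invoke \partprpref{GFC}{a} to the effect that $(\s{GF}^C_n,\s{GI}^C_n)$ is a perfect duality pair. Applying \partthmref{main}{c} immediately yields that $(\s{GF}^C_n,(\s{GF}^C_n)^\perp)$ is a perfect cotorsion pair; in particular $\s{GF}^C_n$ is covering. Now assume further that $R$ has a dualizing complex. Then, again by \partprpref{GFC}{a}, the duality pair $(\s{GF}^C_n,\s{GI}^C_n)$ is product-closed, and \partthmref{main}{a} gives that $\s{GF}^C_n$ is preenveloping.

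For part \prtlbl{b}, assume $R$ has a dualizing complex. By \partprpref{GFC}{b}, the pair $(\s{GI}^C_n,\s{GF}^C_n)$ is a product- and coproduct-closed duality pair. Applying \partthmref{main}{a} gives that $\s{GI}^C_n$ is preenveloping, and \partthmref{main}{b} gives that $\s{GI}^C_n$ is covering.

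Since both parts are pure citations from the previously established results, there is no genuine obstacle in the proof itself; the only thing to be careful about is matching hypotheses correctly (in particular, tracking when the presence of a dualizing complex is needed in order to invoke the stronger closure assertions of Proposition \ref{prp:GFC}). The real content of the theorem has been absorbed into the duality-pair framework of Section \ref{sec:duality} and the general transfer principle of Theorem \ref{thm:main}.
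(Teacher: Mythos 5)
Your proof is correct and follows exactly the same route as the paper: invoke \partprpref{GFC}{a} with \partthmref{main}{c,a} for part (a), and \partprpref{GFC}{b} with \partthmref{main}{b,a} for part (b), tracking the dualizing-complex hypothesis in precisely the same places. There is nothing to add.
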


\begin{proof}
  \proofoftag{a} By \partprpref{GFC}{a}, the class $\s{GF}^C_n$ is the
  left half of a perfect duality pair. Thus the claimed perfect
  cotorsion pair exists by \partthmref{main}{c}. Under the assumption
  of the existence of a dualizing $R$-complex, $\s{GF}^C_n$ is also
  product-closed by
  \partprpref{GFC}{a}, and therefore $\s{GF}^C_n$ is preenveloping by
  \partthmref{main}{a}.

  \proofoftag{b} If $R$ has a dualizing complex,
  \partprpref{GFC}{b} gives that $\s{GI}^C_n$ is the left half of a
  product- and coproduct-closed duality pair. Thus the assertions
  follow from \partthmref{main}{b,a}.
\end{proof}

\def\cprime{$'$}
  \newcommand{\arxiv}[2][AC]{\mbox{\href{http://arxiv.org/abs/#2}{\sf arXiv:#2
  [math.#1]}}}
  \newcommand{\oldarxiv}[2][AC]{\mbox{\href{http://arxiv.org/abs/math/#2}{\sf
  arXiv:math/#2
  [math.#1]}}}\providecommand{\MR}[1]{\mbox{\href{http://www.ams.org/mathscine%
t-getitem?mr=#1}{#1}}}
  \renewcommand{\MR}[1]{\mbox{\href{http://www.ams.org/mathscinet-getitem?mr=#%
1}{#1}}}
\providecommand{\bysame}{\leavevmode\hbox to3em{\hrulefill}\thinspace}
\providecommand{\MR}{\relax\ifhmode\unskip\space\fi MR }
% \MRhref is called by the amsart/book/proc definition of \MR.
\providecommand{\MRhref}[2]{%
  \href{http://www.ams.org/mathscinet-getitem?mr=#1}{#2}
}
\providecommand{\href}[2]{#2}

%\bibliographystyle{amsplain} 
%\bibliography{+references}

\end{document}